\long\def\symbolfootnote[#1]#2{\begingroup
	\def\thefootnote{\fnsymbol{footnote}}\footnote[#1]{#2}\endgroup}
\newtheorem{theorem}{Theorem}[section]
\newtheorem{lemma}[theorem]{Lemma}
\newtheorem{thm}[theorem]{Theorem}
\newtheorem{cor}[theorem]{Corollary}
\theoremstyle{definition}
\newtheorem{rem}[theorem]{Remark}
\newtheorem{defin}[theorem]{Definition}
\newcommand{\id}{\mathrm{id}}
\newcommand{\W}{\mathcal{W}}
\newcommand{\U}{\mathcal{U}}
\begin{document}
	
	\title{A Pair of Garside shadows}
	
	\author[P.~Przytycki]{Piotr Przytycki$^{\dag}$}
	
	\address{
		Department of Mathematics and Statistics,
		McGill University,
		Burnside Hall,
		805 Sherbrooke Street West,
		Montreal, QC,
		H3A 0B9, Canada}
	
	\email{piotr.przytycki@mcgill.ca}
	
	\thanks{$\dag$ Partially supported by NSERC and (Polish) Narodowe Centrum Nauki, UMO-2018/30/M/ST1/00668}

 \thanks{$\dag$ Partially supported by the National Science Foundation under Award No. 2316995.}
	
	\author[Y.~Yau]{Yeeka Yau$^\dag$}%$^{\ddag}$}
	\address{Department of Mathematics and Statistics,
	1 University Heights,
	University of North Carolina Asheville,
	Asheville, NC 28804,
	USA }
	\email{yyau@unca.edu}
	%\thanks{$\ddag$ Partially supported by }
	
	\maketitle
	
	\begin{abstract}
		\noindent We prove that the smallest elements of Shi parts and cone type parts exist and form Garside shadows. The latter resolves a conjecture of Parkinson and the second author as well as a conjecture of Hohlweg, Nadeau and Williams.
        
	\end{abstract}
	
	\section{Introduction}
	\label{sec:introd}
A \emph{Coxeter group} $W$ is a group generated by a finite set $S$ subject only to relations $s^2=1$
for $s\in S$ and $(st)^{m_{st}}=1$ for $s\neq t\in S$, where $m_{st}=m_{ts}\in \{2,3,\ldots,\infty\}$. Here the convention is that $m_{st}=\infty$
means that we do not impose a relation between $s$ and~$t$. By $X^1$ we denote the \emph{Cayley graph} of $W$, that is, the graph with vertex set $X^0=W$
and with edges (of length $1$) joining each $g\in W$ with $gs$, for $s\in S$. For $g\in W$, let $\ell(g)$ denote
the \emph{word length} of $g$, that is, the distance in $X^1$ from $g$ to $\id$.
We
consider the action of $W$ on $X^0=W$ by left multiplication. This induces an action of $W$ on $X^1$.

For $r\in W$ a conjugate of an element of $S$, the \emph{wall}
$\mathcal W_r$ of $r$ is the fixed point set of~$r$ in $X^1$. We call $r$ the \emph{reflection} in $\mathcal W_r$ (for fixed $\mathcal W_r$ such $r$
is unique). Each wall~$\mathcal W$ separates $X^1$ into two components, called \emph{half-spaces}, and a geodesic edge-path in~$X^1$ intersects~$\mathcal W$ at most once \cite[Lem~2.5]{Ronan_2009}. Consequently, the distance in~$X^1$ between $g,h\in W$ is the number of walls separating $g$ and $h$. 

We consider the partial order $\preceq$ on $W$ (called the `weak order' in algebraic combinatorics), where $p\preceq g$ if $p$ lies on a
geodesic in $X^1$ from $\id$ to $g$. Equivalently, there is no wall separating $p$ from both $\id$ and $g$.

\medskip

\noindent \textbf{Shi parts.} Let $\mathcal E$ be the set of walls $\mathcal W$ such that there is no wall separating $\mathcal W$ from  $\id$ (these walls correspond to so-called `elementary roots'). The components of $X^1\setminus \bigcup \mathcal E$ are \emph{Shi components}. For a Shi component $Y$, we call $P = Y \cap X^0$ the corresponding \emph{Shi part}.

Our first result is the following.

\begin{thm} 
\label{thm:Shi}
Let $P$ be a Shi part. Then $P$ has a smallest element with respect to~$\preceq$.
\end{thm}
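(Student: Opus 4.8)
Recall that $p\preceq g$ precisely when $I(p)\subseteq I(g)$, where $I(g)$ denotes the set of walls separating $\id$ from $g$, and that the Shi part $P$ is exactly the set of $g\in W$ with $I(g)\cap\mathcal E=\mathcal F$, for the fixed subset $\mathcal F\subseteq\mathcal E$ consisting of the elementary walls that separate all of $P$ from $\id$. The weak order is a complete meet-semilattice ($\id$ is a lower bound of every subset), and since $p\preceq g\Leftrightarrow I(p)\subseteq I(g)$, for any nonempty $A\subseteq W$ the inversion set $I(\bigwedge A)$ is the largest inversion set contained in $\bigcap_{a\in A}I(a)$. Put $m=\bigwedge P$; then $m\preceq g$ for every $g\in P$, so the theorem follows once we show $m\in P$. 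As $I(m)\subseteq I(g)$ already gives $I(m)\cap\mathcal E\subseteq\mathcal F$, everything comes down to the inclusion $\mathcal F\subseteq I(m)$: every elementary wall lying below the whole Shi part should also lie below its meet.

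To handle this possibly infinite meet I would reduce to finite sub-meets: for finite $F\subseteq P$ the inversion sets $I(\bigwedge F)$ are finite and shrink as $F$ grows, hence stabilise at some $I(\bigwedge F_{0})$, and one checks $\bigwedge F_{0}=m$. So it suffices to show $P$ is closed under pairwise meet, i.e.\ $g,h\in P\Rightarrow g\wedge h\in P$; and since $I(g\wedge h)\cap\mathcal E\subseteq\mathcal F$ is automatic, this reduces to the claim: \emph{if $\mathcal W$ is an elementary wall with $\mathcal W\in I(g)\cap I(h)$, then $\mathcal W\in I(g\wedge h)$}. Suppose not and set $m_{0}=g\wedge h$, so $m_{0}$ lies on the $\id$-side of $\mathcal W$ while $g$ and $h$ lie on the far side. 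On a geodesic $\id\to m_{0}\to g$ the wall $\mathcal W$ is crossed exactly once, after $m_{0}$; let $v$ be the vertex immediately beyond the crossing, so $m_{0}\preceq v\preceq g$ and $\mathcal W\in I(v)$. Defining $v'$ the same way from a geodesic $\id\to m_{0}\to h$ gives $m_{0}\preceq v'\preceq h$ and $\mathcal W\in I(v')$. If $v=v'$ we are done, since then $v\preceq g$ and $v\preceq h$ force $v\preceq g\wedge h=m_{0}$, contradicting $\mathcal W\in I(v)\setminus I(m_{0})$. Thus the entire difficulty is the case $v\neq v'$.

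The cleanest way to settle that case --- and probably the statement worth proving directly --- is that \emph{every Shi component $Y$ is gated as seen from $\id$}: there is a vertex $\gamma$ of $Y$ with $\ell(y)=\ell(\gamma)+d(\gamma,y)$ for all vertices $y$ of $Y$. Such a $\gamma$ is at once the gate and a vertex of $Y$, hence an element of $P$ below all of $P$, i.e.\ the sought smallest element; conversely the theorem plainly implies this gatedness, so nothing is lost. The main obstacle is exactly this gatedness, and it is genuinely delicate: it fails for arbitrary convex subsets of a Coxeter Cayley graph, and indeed already for certain half-spaces --- e.g.\ in the group of type $A_{2}$ the half-space on the far side of the one wall not adjacent to $\id$ is convex but not gated from $\id$. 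So the proof must use more than convexity: it must exploit that all the cutting walls lie in $\mathcal E$, and that the prescribed sides (far for the walls in $\mathcal F$, near for those in $\mathcal E\setminus\mathcal F$) really do carve out a Shi component. I would attempt an induction on $|\mathcal F|$ --- here it helps that $\mathcal E$ is finite (Brink--Howlett), so $Y$ is a finite intersection of elementary half-spaces --- removing one wall $\mathcal W\in\mathcal F$, obtaining by induction the gate $\gamma_{0}$ of the adjacent Shi component on the $\id$-side of $\mathcal W$, and then pushing $\gamma_{0}$ across $\mathcal W$ using that no wall separates $\mathcal W$ from $\id$. Showing this push is well defined and lands inside the remaining half-spaces is, I expect, the hardest step.
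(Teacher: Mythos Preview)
Your reductions are sound: the equivalence $p\preceq g\Leftrightarrow I(p)\subseteq I(g)$, the stabilisation of $I(\bigwedge F)$ inside the finite set $I(g_0)$, and the reformulation as gatedness from $\id$ are all correct, and closure of $P$ under pairwise meet is indeed equivalent to the theorem. But the proposal stops exactly at the real content. Your contradiction argument collapses to the case $v\neq v'$, which you do not resolve; and the induction on $|\mathcal F|$ has two concrete problems. First, ``the adjacent Shi component on the $\id$-side of $\mathcal W$'' need not exist: not every subset of $\mathcal E$ arises as $I(g)\cap\mathcal E$ for some $g$, so $\mathcal F\setminus\{\mathcal W\}$ may correspond to an empty part (already in $\widetilde A_1$ the set $\{\mathcal W_s,\mathcal W_t\}$ gives nothing), and even when nonempty it need not be the part you reach by crossing $\mathcal W$ from $P$. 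Second, ``pushing $\gamma_0$ across $\mathcal W$'' presupposes that $\gamma_0$ is adjacent to $\mathcal W$; you give no reason for this, and the fact that $\mathcal W\in\mathcal E$ (no wall separates it from $\id$) says nothing about its position relative to $\gamma_0$. So the proposal identifies the difficulty but does not supply the idea that overcomes it.

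The paper's proof takes a different route that avoids meets entirely. Given $p_0,p_n\in P$, take a geodesic $(p_0,\ldots,p_n)$ in the Shi component $Y$ and iteratively eliminate local maxima $p_{i-1}\prec p_i\succ p_{i+1}$: if $\mathcal W_r,\mathcal W_q$ are the walls crossed at $p_i$, replace $(p_{i-1},p_i,p_{i+1})$ by the other boundary path of the residue $\langle r,q\rangle p_i$. The key input is Lemma~\ref{lem:key} (bipodality): if $\mathcal W_r,\mathcal W_q\notin\mathcal E$ and $r,q$ are sharp-angled with $\id$ in a geometric fundamental domain, then every $\langle r,q\rangle$-translate of $\mathcal W_r,\mathcal W_q$ is also outside $\mathcal E$, so the whole residue stays in $P$. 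A lexicographic complexity drops, and one ends with a path whose minimum is $\preceq$ both endpoints. This is exactly the mechanism your plan is missing: the elementary-root property is used not to push a single gate across one wall, but to guarantee that the $\langle r,q\rangle$-orbit of walls at a local maximum is uniformly non-elementary. If you want to salvage the meet/gatedness approach, you will almost certainly need this same bipodality lemma to control the $v\neq v'$ case.
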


Theorem~\ref{thm:Shi} was proved independently in a more general form by Dyer, Fishel, Hohlweg and Mark in \cite[Theorem 1.1(1)]{DHFM}). 
Here we give a short proof following the lines of the proof of a related result of the first author and Osajda \cite[Thm~2.1]{OP}.

In \cite{Shi_1987}, Shi proved Theorem~\ref{thm:Shi} for affine $W$. The family $\mathcal E$, which is finite by \cite{Brink1993}, has been extensively studied ever since and has become an important object in algebraic combinatorics, geometric group theory and representation theory. See for example see the survey article \cite{fishel2020}.

By \cite{Brink1993}, Shi parts are in correspondence with the states of an automaton recognising the language of reduced words of the Coxeter group. This partition of a Coxeter group is thus one of the primary examples of `regular' partitions, see \cite{Parkinson-Yau_2022}.
%--- a class of partitions characterising automata recognising reduced words of a Coxeter group (see \cite{Parkinson-Yau_2022}). 

For $g\in W$, let $m(g)$ be the smallest element in the Shi part containing $g$, guaranteed by Theorem~\ref{thm:Shi}. 
Let $M\subset W$ be the set of elements of the form $m(g)$ for $g\in W$. %These elements are called \textit{low-elements} in the literature, see \cite{DH}. 

The $\emph{join}$ of $g,g'\in W$ is the smallest element $h$ (if it exists) satisfying $g\preceq h$ and $g'\preceq h$. A subset $B\subseteq W$ is a \emph{Garside shadow} if it contains $S$, contains $g^{-1}h$ for every $h\in B$ and $g\preceq h$, and contains the join, if it exists, of every $g,g'\in B$.

\begin{thm}
\label{thm:Shi2}
$M$ is a Garside shadow.
\end{thm}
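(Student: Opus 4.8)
The plan is to check the three defining properties of a Garside shadow for $M$. For $g\in W$ write $\mathrm{Inv}(g)$ for the set of walls separating $g$ from $\id$; then $p\preceq g$ iff $\mathrm{Inv}(p)\subseteq\mathrm{Inv}(g)$, a geodesic edge-path from $p$ to $g$ crosses exactly the walls in $\mathrm{Inv}(g)\triangle\mathrm{Inv}(p)$ and no others, and hence two elements lie in the same Shi part iff they have the same \emph{elementary inversions}: $P(g)=\{h:\mathrm{Inv}(h)\cap\mathcal{E}=\mathrm{Inv}(g)\cap\mathcal{E}\}$. In particular $M$ is the set of elements that are $\preceq$-minimal in their Shi part. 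A convenient reformulation, which I would prove first from Theorem~\ref{thm:Shi}, is $(\star)$: \emph{$g\in M$ if and only if the wall of $\{gs,g\}$ lies in $\mathcal{E}$ for every $s\in S$ with $\ell(gs)<\ell(g)$}. Indeed, if some such wall $\mathcal{W}$ is not in $\mathcal{E}$ then $\mathrm{Inv}(gs)=\mathrm{Inv}(g)\setminus\{\mathcal{W}\}$ has the same elementary part, so $gs\in P(g)$ and $gs\prec g$, forcing $g\neq m(g)$; conversely, if $m(g)\prec g$ the last edge of a geodesic from $\id$ through $m(g)$ to $g$ is some $\{gs,g\}$ whose wall separates $m(g)$ from $g$, hence lies in $\mathrm{Inv}(g)\setminus\mathrm{Inv}(m(g))$, which is disjoint from $\mathcal{E}$ because $m(g)\in P(g)$. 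From $(\star)$, $S\subseteq M$ is immediate: the only $s'\in S$ with $\ell(ss')<\ell(s)$ is $s'=s$, and the wall of $\{\id,s\}$ is $\mathcal{W}_s\in\mathcal{E}$.

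For closure under suffixes the key input is the geometric \textbf{Lemma}: \emph{if $g\preceq h$ and a wall $\mathcal{W}\in\mathcal{E}$ separates $g$ from $h$, then $g^{-1}\mathcal{W}\in\mathcal{E}$}, equivalently no wall separates $\mathcal{W}$ from $g$. To prove it, suppose $\mathcal{W}'$ separates $\mathcal{W}$ from $g$; since $\mathcal{W}\in\mathcal{E}$, $\mathcal{W}'$ does not separate $\mathcal{W}$ from $\id$, so $\mathcal{W}'$ separates $\id$ from $g$, hence $\mathcal{W}'\in\mathrm{Inv}(g)\subseteq\mathrm{Inv}(h)$ and $\mathcal{W}'$ does not separate $g$ from $h$; but then a geodesic from $g$ to $h$ stays in the half-space of $\mathcal{W}'$ containing $g$ while crossing $\mathcal{W}$, which lies entirely in the opposite half-space — a contradiction. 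Now let $h\in M$, $g\preceq h$, and set $k=g^{-1}h$; by $(\star)$ it suffices to show that for every right descent $s$ of $k$ the wall of $\{ks,k\}$ lies in $\mathcal{E}$. Since $\mathrm{Inv}(ks)\subseteq\mathrm{Inv}(k)$ and $g$ carries $\mathrm{Inv}(k)$ onto $\mathrm{Inv}(h)\setminus\mathrm{Inv}(g)$, which is disjoint from $\mathrm{Inv}(g)$, we get $\ell(gks)=\ell(g)+\ell(ks)$, so $g\preceq hs$ and $\ell(hs)=\ell(h)-1$; thus $s$ is a right descent of $h$ and the edge $\{hs,h\}$ is the $g$-translate of $\{ks,k\}$. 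As $h\in M$, the wall $\mathcal{W}$ of $\{hs,h\}$ lies in $\mathcal{E}$ by $(\star)$; it separates $g$ from $h$, so the Lemma gives $g^{-1}\mathcal{W}\in\mathcal{E}$, and $g^{-1}\mathcal{W}$ is exactly the wall of $\{ks,k\}$. Hence $k\in M$.

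Closure under joins is the essential difficulty, and I would treat it by passing to biclosed sets of walls. Recall that the finite biclosed sets of walls are exactly the sets $\mathrm{Inv}(w)$, $w\in W$; that the biclosed subsets of a given $\mathrm{Inv}(w)$ are precisely the $\mathrm{Inv}(v)$ with $v\preceq w$; and that if $g\vee g'$ exists then $\mathrm{Inv}(g\vee g')=\overline{\mathrm{Inv}(g)\cup\mathrm{Inv}(g')}$, the biclosure being formed inside the finite biclosed set $\mathrm{Inv}(g\vee g')$. The crucial reformulation of $M$ is then: \emph{$g\in M$ if and only if $\mathrm{Inv}(g)$ equals the biclosure of $\mathrm{Inv}(g)\cap\mathcal{E}$} (equivalently, $\mathrm{Inv}(m(g))$ is the smallest biclosed set containing $\mathrm{Inv}(g)\cap\mathcal{E}$). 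Granting this, join-closure is purely formal: for $g,g'\in M$ with $h:=g\vee g'$ existing, put $D=\mathrm{Inv}(g)\cap\mathcal{E}$ and $D'=\mathrm{Inv}(g')\cap\mathcal{E}$; then $\mathrm{Inv}(h)=\overline{\mathrm{Inv}(g)\cup\mathrm{Inv}(g')}=\overline{\overline{D}\cup\overline{D'}}=\overline{D\cup D'}$, and since $D\cup D'\subseteq\mathcal{E}$ and $D\cup D'\subseteq\mathrm{Inv}(h)$ we get $\overline{D\cup D'}\subseteq\overline{\mathrm{Inv}(h)\cap\mathcal{E}}\subseteq\mathrm{Inv}(h)$, whence $\mathrm{Inv}(h)=\overline{\mathrm{Inv}(h)\cap\mathcal{E}}$ and $h\in M$ by the reformulation.

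Thus the whole theorem reduces to the reformulation, which is where I expect the real work to lie. One point is easy: once one knows that $\overline{\mathrm{Inv}(g)\cap\mathcal{E}}$ is well-defined and contained in $\mathrm{Inv}(g)$, it is a biclosed subset of $\mathrm{Inv}(g)$ with the same elementary part, hence equals $\mathrm{Inv}(m(g))$ by minimality, giving both directions of the reformulation. The genuine obstacle is the well-definedness, since the biclosure of an arbitrary set of walls need not exist: one must show that $\mathrm{Inv}(g)\cap\mathcal{E}$ does admit a smallest biclosed superset and that it is $\mathrm{Inv}(m(g))$ rather than something larger. I would establish this using $(\star)$ and Theorem~\ref{thm:Shi} together with a downward-closure property of $\mathcal{E}$ — roughly, that a wall of $\mathrm{Inv}(g)$ nested between $\id$ and an elementary wall is again elementary — peeling elementary extremal walls off $\mathrm{Inv}(m(g))$ one at a time and inducting on length; the delicate step, and in my view the crux of the whole argument, is to control the new extremal walls created at each peeling step and show they too must be elementary, so that the process is forced and terminates correctly.
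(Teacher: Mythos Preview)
Your treatment of $S\subseteq M$ and of suffix closure is correct and agrees with the paper; your geometric Lemma is exactly the justification behind the paper's one-line ``consequently $g^{-1}\W\in\mathcal E$'' in Lemma~\ref{lem:suffix_shi}, and the characterisation $(\star)$ is correct as well.

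For join closure your route diverges from the paper's and carries a genuine gap. The paper proves directly, via bipodality (Lemma~\ref{lem:key}), that $g\preceq h$ implies $m(g)\preceq m(h)$ (Lemma~\ref{lem:shadow}); join closure then follows in two lines, since for $g,g'\in M$ with join $h$ one gets $g=m(g)\preceq m(h)$ and $g'=m(g')\preceq m(h)$, whence $h\preceq m(h)\preceq h$. You instead reduce everything to showing that $\mathrm{Inv}(m(g))$ is the \emph{global} biclosure of $\mathrm{Inv}(g)\cap\mathcal E$ --- essentially, that $M$ coincides with the low elements of \cite{DH} --- which is the content of \cite[Thm~1.1(2)]{DHFM}, precisely the result this paper is offering an alternative to, and which you do not prove. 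The weaker ``local'' statement that $\mathrm{Inv}(m(g))$ is the smallest biclosed subset of $\mathrm{Inv}(g)$ containing $\mathrm{Inv}(g)\cap\mathcal E$ does follow immediately from Theorem~\ref{thm:Shi}, but it is not enough for your identity $\overline{\overline{D}\cup\overline{D'}}=\overline{D\cup D'}$: that step needs the biclosed subsets of $\mathrm{Inv}(h)$ to form a closure system, which they do not, since intersections of inversion sets need not be biclosed (already in type $A_2$). Tracing through, what your join argument actually requires is that every biclosed set containing $\mathrm{Inv}(g)\cap\mathcal E$ contains $\mathrm{Inv}(g)$ whenever $g\in M$; unwound, this is asking whether $\mathrm{Inv}(g)\cap\mathcal E\subseteq\mathrm{Inv}(v)$ forces $g\preceq m(v)$, which is Lemma~\ref{lem:shadow} in disguise. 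Your peeling sketch does not supply this, and the ``downward-closure property of $\mathcal E$'' you invoke is, read literally, vacuous: by definition no wall separates $\id$ from a wall in $\mathcal E$. The bipodality argument of Lemma~\ref{lem:shadow} is exactly what fills this hole, and it does so without any biclosed-set machinery.
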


Theorem~\ref{thm:Shi2} was also obtained in \cite[Thm 1.1(2)]{DHFM}, where the authors showed that $M$ is the set of so-called `low elements' introduced in \cite{DH}. We give an alternative  proof using `bipodality', a notion introduced in \cite{DH} and rediscovered in \cite{OP}.

\medskip

\noindent \textbf{Cone type parts.} For each $g\in W$, let $T(g)=\{h\in W \ | \ \ell(gh)=\ell(g)+\ell(h)\}$. For $T\subset W$, the \emph{cone type part} $Q(T)\subset W$ is the set of all $g^{-1}$ with $T(g)=T$. In other words, $Q(T)$ consists of $g$ such that $T$ is the set of vertices on geodesic edge-paths starting at $g$ and passing through $\id$ that appear after $\id$, including $\id$. 

We obtain a new proof of the following.

\begin{thm} 
\label{thm:conetype}
{\cite[Thm~1]{Parkinson-Yau_2022}} Let $Q$ be a cone type part. Then $Q$ has a smallest element with respect to $\preceq$.
\end{thm}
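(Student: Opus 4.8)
The plan is to work with the \emph{backward cone type} $T'(g):=T(g^{-1})$. Unwinding the definitions, $g\in Q(T)$ if and only if $T'(g)=T$, and $T'(g)$ is exactly the set of $h\in W$ such that no wall separates $\id$ from both $g$ and $h$. Since $p\preceq x$ is equivalent to the inclusion of the set of walls separating $p$ from $\id$ in the set of walls separating $x$ from $\id$, the assignment $g\mapsto T'(g)$ is order-reversing: $p\preceq g$ forces $T'(g)\subseteq T'(p)$. In particular all elements of $Q$ share the backward cone type $T$, and every $\preceq$-lower bound $p$ of $Q$ satisfies $T'(p)\supseteq T$.

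First I would reduce the theorem to the statement that any two elements of $Q$ admit a common $\preceq$-lower bound that again lies in $Q$. Indeed, $Q$ is nonempty and, for each $g\in Q$, the set $\{g'\in Q:g'\preceq g\}$ is finite, so $Q$ has at least one $\preceq$-minimal element $m$; given the statement, a common lower bound $c\in Q$ of $m$ and an arbitrary $g\in Q$ satisfies $c\preceq m$, hence $c=m$ by minimality of $m$, hence $m\preceq g$, so $m=\min Q$. A natural candidate for such a common lower bound is the meet $g_1\wedge g_2$, which exists because $(W,\preceq)$ is a complete meet-semilattice (a theorem of Björner); so, equivalently, it would suffice to show that cone type parts are closed under meets, the point being that $g_1\wedge g_2$ still has backward cone type $T$.

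To produce the common lower bound I would follow the proof of Theorem~\ref{thm:Shi} given in \cite[Thm~2.1]{OP}. Starting from incomparable $g_1,g_2\in Q$, one passes to strictly smaller elements of $Q$, decreasing $\ell(g_1)+\ell(g_2)$, and iterates until a common lower bound is reached; the crux is to carry out such a step without leaving $Q$, i.e.\ without changing the backward cone type. As in \cite{OP}, this is governed by \emph{bipodality} (the notion introduced in \cite{DH} and rediscovered in \cite{OP}): bipodality controls, for two walls separating a given element from $\id$, how the two half-spaces fit together, which is precisely what is needed to ensure that the walls responsible for the value of $T'$ persist when we move to a smaller element. A mild extra wrinkle relative to the Shi-part case is that a cone type part, rather than being a single Shi component, is a union of finitely many Shi parts (finitely many by \cite{Brink1993}), so one must also verify that the descent does not jump to a Shi part outside $Q$; this again follows from bipodality together with the recursive structure of cone types in terms of ascents. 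The main obstacle is exactly this control of the descent: naively intersecting the data of $g_1$ and $g_2$ does not work, because the set of walls separating a common lower bound from $\id$ is in general strictly smaller than the intersection of the corresponding sets for $g_1$ and $g_2$, so $T'$ cannot be read off by intersection; bipodality is the mechanism that excludes the bad configurations, and adapting it from Shi parts to backward cone types is where the real work lies.
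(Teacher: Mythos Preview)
Your outline is the paper's approach---run the path-modification argument of Theorem~\ref{thm:Shi} with $P$ replaced by $Q$---but you have misdiagnosed where the work lies, in two complementary ways.

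First, you omit the one genuinely new ingredient. In the Shi case the initial geodesic edge-path from $p_0$ to $p_n$ stays in the Shi part for free, because Shi parts are connected components of a complement. Cone type parts are not of this form, so you need the convexity statement of Remark~\ref{rem:convex}: every vertex on a geodesic from $g$ to $g'$ in $Q(T)$ again has backward cone type $T$. Without this the argument never gets off the ground. Your invocation of Bj\"orner's meet-semilattice theorem is a detour; the paper neither needs nor uses it.

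Second, the step you flag as ``where the real work lies'' is in fact \emph{easier} here than in the Shi proof, and does not use Lemma~\ref{lem:key} (bipodality) at all. At a peak $p_{i-1}\prec p_i\succ p_{i+1}$ with separating walls $\W_r,\W_q$, the equalities $T(p_{i-1}^{-1})=T=T(p_{i+1}^{-1})$ give $\W_r,\W_q\notin\partial T$ directly. If $r,q$ do not commute, then $T$ lies in a single geometric fundamental domain for $\langle r,q\rangle$ (since $\id\in T$ and $\W_r,\W_q$ already do not separate points of $T$), so every translate $\W'\in\langle r,q\rangle\{\W_r,\W_q\}$ is disjoint from $T$ and hence $\W'\notin\partial T$. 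That is the whole claim; no separating wall $\U$ for $\id$ is produced, and no ``recursive structure of cone types in terms of ascents'' or decomposition into Shi parts is invoked. Your concern about ``jumping to a Shi part outside $Q$'' is a red herring: the argument tracks $\partial T$ directly, not the Shi walls.
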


For $g\in W$, let $\mu(g)$ be the smallest element in the cone type part containing $g$. Let $\Gamma\subset W$ be the set of elements of form $\mu(g)$ for $g\in W$
These elements are called the \textit{gates} of the cone type partition in \cite{Parkinson-Yau_2022}. 

We also obtain the following new result, confirming in part {\cite[Conj~1]{Parkinson-Yau_2022}}.

\begin{thm} 
\label{thm:conetype2}
For any $g,g'\in \Gamma$, if the join of $g$ and $g'$ exists, then it belongs to~$\Gamma$. 
\end{thm}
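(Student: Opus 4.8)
The plan is to phrase everything in terms of sets of walls and to follow the template behind Theorem~\ref{thm:Shi2}. For $w\in W$ write $\mathrm{Inv}(w)$ for the set of walls separating $\id$ from $w$, and call a set of this form an \emph{inversion set}. Then $p\preceq w$ iff $\mathrm{Inv}(p)\subseteq\mathrm{Inv}(w)$, and when $w\vee w'$ exists, $\mathrm{Inv}(w\vee w')$ is the smallest inversion set containing $\mathrm{Inv}(w)\cup\mathrm{Inv}(w')$. The first step is a wall description of cone types: since $\ell(ab)=\ell(a)+\ell(b)$ holds exactly when $\id$ lies on a geodesic from $a^{-1}$ to $b$, one gets that $h\in T(w^{-1})$ iff no wall separates $\id$ from both $w$ and $h$, i.e. iff $\mathrm{Inv}(w)\cap\mathrm{Inv}(h)=\emptyset$. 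Hence $w$ and $w'$ lie in the same cone type part $[w]$ iff for every $x\in W$ the set $\mathrm{Inv}(x)$ meets $\mathrm{Inv}(w)$ exactly when it meets $\mathrm{Inv}(w')$; in particular, using Theorem~\ref{thm:conetype}, the gate $\mu(w)$ of $[w]$ satisfies $\mathrm{Inv}(\mu(w))=\bigcap_{v\in[w]}\mathrm{Inv}(v)$, so that $g\in\Gamma$ iff every wall of $\mathrm{Inv}(g)$ lies in $\mathrm{Inv}(v)$ for all $v\in[g]$.

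I would then reduce the theorem to a single statement. Let $g,g'\in\Gamma$, suppose $h:=g\vee g'$ exists, and put $h_0:=\mu(h)$. If $g\preceq h_0$ and $g'\preceq h_0$, then $\mathrm{Inv}(g)\cup\mathrm{Inv}(g')\subseteq\mathrm{Inv}(h_0)$; as $\mathrm{Inv}(h)$ is the smallest inversion set containing $\mathrm{Inv}(g)\cup\mathrm{Inv}(g')$ while $\mathrm{Inv}(h_0)\subseteq\mathrm{Inv}(h)$, this forces $h_0=h$, so $h\in\Gamma$. Thus it suffices to show:
\begin{equation*}
\text{if } g\in\Gamma \text{ and } g\preceq h,\ \text{ then } g\preceq\mu(h).
\end{equation*}

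Suppose $g\not\preceq\mu(h)$. Walking down a saturated chain from $g$ to $g\wedge\mu(h)$, the wall removed at the first step is outermost in $\mathrm{Inv}(g)$ — so $\mathrm{Inv}(g^{-}):=\mathrm{Inv}(g)\smallsetminus\{\mathcal W\}$ is an inversion set with $g^{-}\lessdot g$ — and it does not lie in $\mathrm{Inv}(\mu(h))$; call this wall $\mathcal W$. Since $g\in\Gamma$ is the least element of $[g]$ and $g^{-}\prec g$, we have $g^{-}\notin[g]$, so there is $x\in W$ with $\mathrm{Inv}(x)\cap\mathrm{Inv}(g^{-})=\emptyset$ and $\mathrm{Inv}(x)\cap\mathrm{Inv}(g)\neq\emptyset$, whence necessarily $\mathrm{Inv}(x)\cap\mathrm{Inv}(g)=\{\mathcal W\}$. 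If one could also arrange $\mathrm{Inv}(x)\cap\mathrm{Inv}(h)=\{\mathcal W\}$, the argument would finish at once: from $\mathcal W\in\mathrm{Inv}(x)\cap\mathrm{Inv}(h)$ we would get $x\notin T(h^{-1})=T(\mu(h)^{-1})$, hence $\emptyset\neq\mathrm{Inv}(x)\cap\mathrm{Inv}(\mu(h))\subseteq\mathrm{Inv}(x)\cap\mathrm{Inv}(h)=\{\mathcal W\}$, forcing $\mathcal W\in\mathrm{Inv}(\mu(h))$, contrary to the choice of $\mathcal W$. So the heart of the matter is to upgrade the witness $x$ from $\mathrm{Inv}(g)$ to $\mathrm{Inv}(h)$: to kill the walls of $\mathrm{Inv}(x)\cap(\mathrm{Inv}(h)\smallsetminus\mathrm{Inv}(g))$ without losing $\mathcal W$. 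Here the special form $h=g\vee g'$ must be used — those extra walls all lie in the closure of $\mathrm{Inv}(g)\cup\mathrm{Inv}(g')$, hence are ``forced'' by the walls of $\mathrm{Inv}(g)$ together with those of $\mathrm{Inv}(g')$ — and I expect this step to be the genuine difficulty, to be handled by the cone-type analogue of the bipodality property used in the proof of Theorem~\ref{thm:Shi2}: namely that the inversion set of an element of $\Gamma$ is closed under adjoining such forced walls, so that whenever $x$ detects an extra wall one may replace $x$ by a smaller element (for instance $x\wedge v$ for a suitable $v$) still detecting $\mathcal W$ alone. It may also help to exploit that cone type parts are unions of Shi parts, so that $\Gamma\subseteq M$ and $h=g\vee g'\in M$ already by Theorem~\ref{thm:Shi2}, leaving only the passage from ``least in its Shi part'' to ``least in its cone type part''.
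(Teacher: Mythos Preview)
Your reduction is correct and is exactly the one the paper makes: the theorem follows once you show $\mu(g)\preceq\mu(h)$ whenever $g\preceq h$ (your formulation, with $g\in\Gamma$ so $\mu(g)=g$, is equivalent). The inversion-set setup and the production of a witness $x$ with $\mathrm{Inv}(x)\cap\mathrm{Inv}(g)=\{\mathcal W\}$ are also fine.

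But the proposal stops precisely where the content is. You yourself flag the ``upgrade $x$ so that $\mathrm{Inv}(x)\cap\mathrm{Inv}(h)=\{\mathcal W\}$'' step as the genuine difficulty and do not carry it out; the suggestions you float do not work. First, your instinct that the special form $h=g\vee g'$ must be used is misleading: the paper proves $\mu(g)\preceq\mu(h)$ for \emph{arbitrary} $g\preceq h$, so no join structure is available or needed, and an argument that leans on it is heading the wrong way. Second, invoking $\Gamma\subseteq M$ and Theorem~\ref{thm:Shi2} only yields $h\in M$, which says nothing about whether $h=\mu(h)$. There is also a smaller gap earlier: you assert that the first wall removed on a saturated chain from $g$ down to $g\wedge\mu(h)$ lies outside $\mathrm{Inv}(\mu(h))$, but $\mathrm{Inv}(g\wedge\mu(h))$ can be strictly contained in $\mathrm{Inv}(g)\cap\mathrm{Inv}(\mu(h))$, so such a chain may well begin by removing a wall that \emph{is} in $\mathrm{Inv}(\mu(h))$; you would need a separate argument that some outermost wall of $\mathrm{Inv}(g)$ avoids $\mathrm{Inv}(\mu(h))$.

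The paper's argument for $\mu(g)\preceq\mu(h)$ is not a witness-shrinking argument at all. It follows the template of Lemma~\ref{lem:shadow}: induct on the number of cone type parts traversed by a geodesic from $h$ to $g$; at the key step, with $f$ the neighbour of $h$ across $\mathcal W_r\in\partial T$ and $\mathcal W_q\notin\partial T$ a wall from $h$ toward $\id$, replace $(h,f)$ by the opposite pair $(\bar h,\bar f)$ in the residue $\langle r,q\rangle h$. The new difficulty, absent in the Shi case, is that one must show $\bar f$ stays in the \emph{same} cone type part as $f$, i.e.\ that $r\mathcal W_q\notin\partial\widetilde T$ for $\widetilde T=T(f^{-1})$. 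This is where the real work is: assuming a vertex $b\in\widetilde T$ adjacent to $r\mathcal W_q$, one finds an auxiliary wall $\mathcal W'$ and analyses the four components of $X^1\setminus(\mathcal W_r\cup r\mathcal W')$ to force the contradictory inclusions $r\Lambda_2\subsetneq\Lambda_1$ and $r\Lambda_1\subsetneq\Lambda_2$. That geometric step is the substance your outline is missing, and it does not appear to translate into the ``replace $x$ by $x\wedge v$'' picture you sketch.
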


By \cite[Prop~4.27(i)]{Parkinson-Yau_2022}, this implies that $\Gamma$ is a Garside shadow. Furthermore, $\Gamma$ is the set of states of a the minimal automaton (in terms of the number of states) recognising the language of reduced words of a Coxeter group. This verifies \cite[Conj~1]{HNW}.
\\ \par
The paper is organised as follows. In Section~\ref{sec:shi_components} we discuss `bipodality' and use it to prove Theorem~\ref{thm:Shi} and Theorem~\ref{thm:Shi2}. In Section~\ref{sec:cone_type_components} we focus on the cone type parts and give the proofs of Theorem~\ref{thm:conetype} and Theorem~\ref{thm:conetype2}.

\medskip

\noindent \textbf{Acknowledgements.} We thank Christophe Hohlweg and Damian Osajda for discussions and feedback.

\section{Shi parts}
\label{sec:shi_components}

The following property was called \emph{bipodality} in \cite{DH}. It was rediscovered in \cite{OP}.
	
\begin{defin} Let $r,q\in W$ be reflections. Distinct walls $\W_r,\W_{q}$ \emph{intersect}, if $\W_r$ is not contained in a half-space
for $\W_{q}$ (this relation is symmetric). Equivalently, $\langle r,q\rangle$ is a finite group. We say that such $r,q$ are
\emph{sharp-angled}, if $r$ and $q$ do not commute and $\{r,q\}$ is conjugate into $S$. In particular, there is a component of $X^1\setminus (\W_r\cup
\W_q)$ whose intersection $F$ with $X^0$ is a fundamental domain for the action of $\langle r,q\rangle$ on~$X^0$. We call such $F$ a \emph{geometric
fundamental domain for $\langle r,q\rangle$}.
\end{defin}

\begin{lemma}[
{\cite[Lem~3.2]{OP}, special case of \cite[Thm~4.18]{DH}}]
\label{lem:key} Suppose that reflections $r,q\in W$ are sharp-angled, and that $g\in W$ lies in a geometric fundamental domain for $\langle
r,q\rangle$. Assume that there is a wall $\U$ separating $g$ from $\W_r$ or from $\W_q$. Let $\W'$ be a wall distinct from $\W_r,\W_q$ that is the
translate of $\W_r$ or $\W_q$ under an element of $\langle r,q\rangle$. Then there is a wall $\U'$ separating $g$ from $\W'$.
\end{lemma}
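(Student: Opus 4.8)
The plan is to work inside $D=\langle r,q\rangle$ and transport the separating wall around $D$'s bouquet of reflection walls. Since $r,q$ are sharp-angled they do not commute and $\langle r,q\rangle$ is finite, so $D$ is dihedral of order $2m$ for some $m\ge 3$, and its $m$ reflection walls pairwise intersect. First I would pin down the combinatorics of this bouquet: the reflection in any bouquet wall swaps the endpoints of every edge it crosses, and $F$ meets each $D$-orbit of vertices exactly once, so no edge of $F$ is crossed by a bouquet wall; hence $F$ is a single sector of the bouquet, and as its boundary lies in $\W_r\cup\W_q$, the two bouquet walls bordering $F$ are precisely $\W_r$ and $\W_q$. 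List the walls cyclically as $\W_1=\W_r,\W_2,\dots,\W_m=\W_q$, so $F$ is the sector between $\W_m$ and $\W_1$; every wall $\W'$ in the statement is then one of $\W_2,\dots,\W_{m-1}$, since each reflection of $D$ is $D$-conjugate to $r$ or to $q$. By the symmetry interchanging $r$ and $q$, I may assume the given wall $\U$ separates $g$ from $\W_1=\W_r$.

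Next I would transport $\U$ step by step around the bouquet, from $\W_1$ towards $\W_{m-1}$. The mechanism is: for $d\in D$, if a wall $\mathcal{V}$ separates $g$ from a wall $\mathcal{Z}$, then $d\mathcal{V}$ separates $dg$ from $d\mathcal{Z}$. So, having at some stage a wall $\mathcal{V}$ separating $g$ from $\W_k$, pick $d\in D$ carrying $\W_k$ to $\W_{k+1}$; then $d\mathcal{V}$ separates $dg$ from $\W_{k+1}$, and this already separates $g$ from $\W_{k+1}$ as soon as $g$ lies on the same side of $d\mathcal{V}$ as $dg$ — equivalently, as soon as $\mathcal{V}$ does not separate $g$ from $d^{-1}g$. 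When this holds I advance; when it fails I would instead extract a substitute separating wall, the expectation being that the wall $\W_k$ just stepped across supplies one — it is disjoint from $\mathcal{V}$ and lies on the side of $\mathcal{V}$ away from $g$, while $\W_{k+1}$ crosses $\W_k$, so $\mathcal{V}$, or $\W_k$ itself, ought to still shield $g$ from $\W_{k+1}$. Iterating reaches every $\W_k$ with $2\le k\le m-1$.

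The main obstacle I anticipate is exactly this bad case: understanding which walls can separate a vertex $g\in F$ from its translate $dg\in dF$, and using that to rule out — or turn to advantage — the configurations in which $d\mathcal{V}$ crosses $\W_{k+1}$ or falls on the wrong side of $g$. Here is where the hypotheses are genuinely used. Sharp-angledness makes $F$ a \emph{geometric} fundamental domain, so $F$ is the intersection of the half-spaces of $\W_r$ and of $\W_q$ that contain it, which pins down the side of $g$ relative to an auxiliary wall. The assumption $g\in F$ keeps $g$ outside the bouquet throughout the iteration. And the single-crossing property of walls recalled in the introduction is what forbids the pathological relative positions. An alternative route would be to pass to the based root system of $W$ and argue inside the rank-two sub-root-system spanned by the roots of $r$ and $q$, where the $\W_k$ become the extreme rays of a plane cone and ``separation'' turns into a linear inequality, as in \cite{DH}; but I would follow the more self-contained combinatorial argument of \cite{OP}.
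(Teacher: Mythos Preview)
The paper does not prove this lemma; it is quoted from \cite[Lem~3.2]{OP} (and flagged as a special case of \cite[Thm~4.18]{DH}) and then used as a black box. So there is no in-paper argument to compare against, and your proposal should be judged on its own.

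Your first paragraph is fine: the dihedral setup, the identification of $F$ as the sector bounded by $\W_r$ and $\W_q$, the cyclic labelling $\W_1=\W_r,\dots,\W_m=\W_q$, and the reduction by symmetry to the case that $\U$ separates $g$ from $\W_r$ are all correct.

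The gap is exactly where you say it is, and your tentative fix does not close it. In the ``bad case'' you propose that ``$\mathcal V$, or $\W_k$ itself, ought to still shield $g$ from $\W_{k+1}$''. Neither works in general. All $m$ bouquet walls pairwise intersect (this is what finiteness of $\langle r,q\rangle$ means), so $\W_k$ can never separate anything from $\W_{k+1}$. And nothing you have arranged prevents $\W_{k+1}$ from crossing $\mathcal V$: you only know $\mathcal V\cap\W_k=\emptyset$, and since $\W_{k+1}$ meets $\W_k$ on the far side of $\mathcal V$, the most you get for free is that $\W_{k+1}$ has points in $H^-(\mathcal V)$ --- it may perfectly well also have points in $H^+(\mathcal V)$. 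So at this step you have no candidate for a wall separating $g$ from $\W_{k+1}$, and the induction stalls. The rotation $d$ you use to ``carry $\W_k$ to $\W_{k+1}$'' moves $g$ out of $F$ into an adjacent sector, and there is no general reason the walls separating $g$ from $dg$ should interact nicely with $\mathcal V$; the hypotheses you list (sharp-angledness, $g\in F$, single-crossing of geodesics) do not by themselves forbid the configuration $\mathcal V\cap\W_{k+1}\neq\emptyset$.

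If you want to salvage the combinatorial route, the argument in \cite{OP} does not iterate around the bouquet wall-by-wall with a rotation. One instead reflects $\U$ across the boundary wall it is disjoint from and analyses the resulting nested configuration $g\mid\U\mid\W_r\mid r\U$ directly against each $\W'$; the key extra ingredient you are missing is a case analysis on whether $\U$ (or $r\U$) meets $\W_q$, which is where the hypothesis that $g$ lies in the geometric fundamental domain bounded by \emph{both} $\W_r$ and $\W_q$ is actually used. Alternatively, as you note, the root-system argument of \cite{DH} handles this uniformly via dominance of positive roots in the rank-two subsystem.
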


\begin{figure}[H]
    \centering
    
    \begin{tikzpicture}[scale=0.7]
    % Draw intersecting lines
    % W_r
    \draw (1.3,5) -- (1.3,-5);
    \node at (1.4,5.2) {$\mathcal{W}_r$};
    % W_q
    \draw (4,4) -- (-5, -5);
    \node at (4.3,4.3) {$\mathcal{W}_q$};
    % W'
    \draw (-2.2,4.8) -- (5.8,-3.2);
    \node at (-2,5.3) {$\mathcal{W}'$};
    % U
    \draw (0.1,5) -- (0.1,-5);
    \node at (0.2,5.4) {$\mathcal{U}$};
    % U'
    \draw (-3.8,4) -- (4.2,-4);
    \node at (-4,3.6) {$\mathcal{U}'$};
    % g
    \node at (-0.8,-2.9) {$g$};
    \end{tikzpicture}
    
    \caption{Lemma~\ref{lem:key} for the case $m_{rq} = 4$}
    \label{fig:enter-label}
\end{figure}
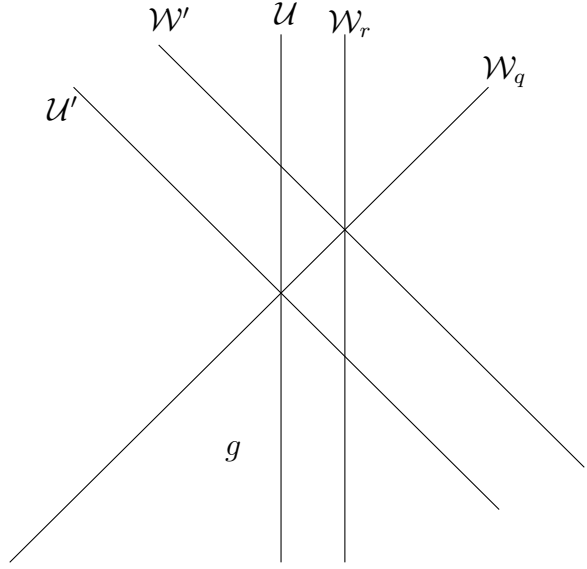

The following proof is surprisingly the same as that for a different result \cite[Thm~2.1]{OP}. 

\begin{proof}[Proof of Theorem~\ref{thm:Shi}] Let $P=Y\cap X^0$, where $Y$ is a Shi component.
It suffices to show that for each $p_0,p_n\in P$ there is $p\in P$ satisfying $p_0\succeq p\preceq p_n$. Let $(p_0,p_1,\ldots,p_n)$ be the
vertices of a geodesic edge-path $\pi$ in $X^1$ from $p_0$ to~$p_n$, which lies in $Y$. Let $L=\max_{i=0}^n\ell(p_i)$.

We will now modify $\pi$ and replace it by another embedded edge-path from $p_0$ to~$p_n$ with vertices in $P$, so that there is no
$p_i$ with $p_{i-1}\prec p_i\succ p_{i+1}$. Then we will be able to choose $p$ to be the smallest $p_i$ with respect to $\preceq$.

If $p_{i-1}\prec p_i\succ p_{i+1}$, then let $\W_r,\W_q$ be the (intersecting) walls separating $p_i$ from $p_{i-1},p_{i+1}$, respectively. Moreover, if $r$ and~$q$ do not commute, then $r,q$ are sharp-angled, with $\id$
in a geometric fundamental domain for $\langle r,q\rangle$. We claim that all the elements of the \emph{residue} $R=\langle r,q\rangle (p_i)$ lie in~$P$.

Indeed, since $p_{i-1},p_{i+1}$ are both in $P$, we have that $\W_r,\W_q\notin \mathcal E$. 
It remains to justify that each wall $\W'\neq \W_r,\W_q$ that is the translate
of $\W_r$ or $\W_q$ under an element of $\langle r,q\rangle$ does not belong to $\mathcal E$. We can thus assume that $r$ and $q$ do not commute, since otherwise there is no
such~$\W'$. Since $\W_r\notin \mathcal E$, there is a wall~$\U$ separating $\id$ from~$\W_r$. By Lemma~\ref{lem:key}, there is a wall $\U'$ separating $\id$
from~$\W'$, justifying the claim.

We now replace the subpath $(p_{i-1},p_i,p_{i+1})$ of $\pi$ by the second embedded edge-path with vertices in the residue~$R$ from $p_{i-1}$ to~$p_{i+1}$. Since all the elements of $R$ are $\prec p_i$ \cite[Thm~2.9]{Ronan_2009}, this decreases the complexity
of $\pi$ defined as the tuple $(n_L,\ldots,n_2,n_1)$, where $n_j$ is the number of $p_i$ in $\pi$ with $\ell(p_i)=j$, with lexicographic
order. After possibly removing a subpath, we can assume that the new edge-path is embedded. After finitely many such modifications, we obtain the desired path.
\end{proof}

\begin{lemma} 
\label{lem:shadow}
For $g\preceq h$, we have $m(g)\preceq m(h)$.
\end{lemma}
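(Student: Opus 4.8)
The plan is to use meets in the weak order. For $g\in W$ write $\mathrm{Inv}(g)$ for the set of walls separating $g$ from $\id$, so that $p\preceq g$ iff $\mathrm{Inv}(p)\subseteq\mathrm{Inv}(g)$, and set $E(g)=\mathrm{Inv}(g)\cap\mathcal{E}$. Any two elements of a single Shi part are joined by an edge-path in $X^1$ crossing no wall of $\mathcal{E}$, so elements of the same Shi part have equal $E(\cdot)$; in particular $E(m(g))=E(g)$ and $E(m(h))=E(h)$, while $g\preceq h$ gives $E(g)\subseteq E(h)$. The weak order on $W$ is a meet-semilattice, so the meet $w:=g\wedge m(h)$ exists, and it satisfies $w\preceq g$ and $w\preceq m(h)$. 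The crux will be to show that $w$ lies in the Shi part $P$ of $g$: granting this, $m(g)\preceq w$ because $m(g)$ is the smallest element of $P$ by Theorem~\ref{thm:Shi}, and hence $m(g)\preceq w\preceq m(h)$, as required.

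I would therefore reduce the lemma to the assertion $w\in P$. Since $w\preceq g$, the walls separating $w$ from $g$ are precisely those in $\mathrm{Inv}(g)\setminus\mathrm{Inv}(w)$, and $E(w)\subseteq E(g)$ is automatic; so $w\in P$ is equivalent to the inclusion $E(g)\subseteq\mathrm{Inv}(w)$. Now if $\mathcal{W}\in E(g)$, then because $g\preceq h$ we also have $\mathcal{W}\in E(h)=E(m(h))$, so $\mathcal{W}$ separates $\id$ from both $g$ and $m(h)$. Hence everything comes down to proving: \emph{an elementary wall separating $\id$ from both $g$ and $m(h)$ also separates $\id$ from $g\wedge m(h)$.}

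This final statement is where the real work lies, and it genuinely uses both that $m(h)$ is the minimum of its Shi part and that $g\preceq h$ — for two unrelated elements it fails, e.g.\ in $S_3$ the (elementary) wall of $sts$ separates $\id$ from $st$ and from $ts$, yet $st\wedge ts=\id$. I would attack it by contradiction, in the spirit of the proof of Theorem~\ref{thm:Shi}: assume an elementary wall $\mathcal{W}$ separates $w=g\wedge m(h)$ from both $g$ and $m(h)$, so that $g$ and $m(h)$ lie on the far side of $\mathcal{W}$ while $w$ lies on the side of $\id$. On geodesics leaving $w$ towards $g$ and towards $m(h)$ one crosses, right after $w$, two walls $\W_r,\W_q$; exploiting that $\mathcal{W}\in\mathcal{E}$ sits next to $\id$, I expect one can arrange that $r$ and $q$ are sharp-angled with $\id$ (hence $w$) in a geometric fundamental domain for $\langle r,q\rangle$, and then invoke Lemma~\ref{lem:key} to route $m(h)$ through the residue $\langle r,q\rangle$-orbit to an element that still lies in the Shi part of $h$ but is strictly below $m(h)$, contradicting minimality of $m(h)$ (equivalently, contradicting that $w$ is the meet). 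Exhibiting the sharp-angled pair and checking that the residue move does not leave the Shi part of $h$ is the main difficulty, and is exactly the kind of bookkeeping already carried out in the proof of Theorem~\ref{thm:Shi}.
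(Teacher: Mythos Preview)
Your reduction is sound: setting $w=g\wedge m(h)$ and showing $E(g)\subseteq\mathrm{Inv}(w)$ would indeed yield $m(g)\preceq w\preceq m(h)$. But the crucial implication --- that an elementary wall in $\mathrm{Inv}(g)\cap\mathrm{Inv}(m(h))$ must lie in $\mathrm{Inv}(w)$ --- is left as a speculative sketch, and the sketch does not go through. The walls $\W_r,\W_q$ you introduce are simply the \emph{first} walls crossed from $w$ towards $g$ and towards $m(h)$; they bear no a~priori relation to the elementary wall $\W$, and ``exploiting that $\W\in\mathcal{E}$ sits next to $\id$'' does not supply one. The phrase ``route $m(h)$ through the residue $\langle r,q\rangle$-orbit'' is also not well-defined: that residue is $\langle r,q\rangle w$, which need not meet the Shi part of $h$ at all. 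If instead you hope to produce $w'\succ w$ with $w'\preceq g$ and $w'\preceq m(h)$, you would need a wall adjacent to $w$ lying in both $\mathrm{Inv}(g)\setminus\mathrm{Inv}(w)$ and $\mathrm{Inv}(m(h))\setminus\mathrm{Inv}(w)$ --- precisely what the meet property forbids. Nothing in your setup uses that $\W$ is elementary or that $g\preceq h$ to break this impasse, and the analogy with Theorem~\ref{thm:Shi} is loose: there one rewrites a path \emph{within} a fixed Shi component, whereas here the residue at $w$ typically straddles several.

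For comparison, the paper avoids meets entirely. It inducts on the number of Shi components a geodesic from $h$ down to $g$ traverses, reducing to showing $m(f)\preceq m(h)$ when $f$ is a neighbour of $h$ separated by a wall $\W_r\in\mathcal{E}$. If some wall $\W_q\notin\mathcal{E}$ is adjacent to $h$ on the $\id$-side, Lemma~\ref{lem:key} (applied at $h$, not at a meet) shows that in $\langle r,q\rangle\{\W_r,\W_q\}$ only $\W_r$ is elementary, so passing to the antipodal pair $\bar h,\bar f$ in the residue $\langle r,q\rangle h$ preserves both Shi parts while decreasing length; iterating forces $h=m(h)$, whence $f\preceq m(h)$ and $m(f)\preceq m(h)$.
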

\begin{proof} Let $k$ be the minimal number of distinct Shi components traversed by a geodesic edge-path $\gamma$ from $h$ to $g$. We proceed by induction on $k$, where for $k=1$ we have $m(g)=m(h)$.
Suppose now $k>1$.
If a neighbour $f$ of $h$ on $\gamma$ lies in the same Shi component as $h$, then we can replace $h$ by $f$. Thus we can assume that $f$ lies in a different Shi component than $h$. Consequently, the wall $\W_r$ separating $h$ from~$f$ belongs to $\mathcal E$. Since $g\preceq f$, by the inductive assumption we have $m(g)\preceq m(f)$. Thus it suffices to prove $m(f)\preceq m(h)$.

In the first case, where for every neighbour $h'$ of $h$ on a geodesic edge-path from~$h$ to $\id$, the wall separating $h$ from $h'$ belongs to~$\mathcal E$, we have $h=m(h)$ and we are done. Otherwise, let $\W_q$ be such a wall separating $h$ from $h'$ outside $\mathcal E$. 
If $r$ and~$q$ do not commute, then $r,q$ are sharp-angled, with $\id$
in a geometric fundamental domain for~$\langle r,q\rangle$. By Lemma~\ref{lem:key}, among the walls in $\langle r,q\rangle\{\W_r,\W_q\}$ only $\W_r$ belongs to~$\mathcal E$. Let $\bar h, \bar f$ be the vertices opposite to $f,h$ in the residue $\langle r,q\rangle h$. We have $m(\bar h)=m(h), m(\bar f)=m(f)$. Replacing $h,f$ by $\bar h,\bar f$, and possibly repeating this procedure finitely many times, we arrive at the first case.
\end{proof}

Lemma~\ref{lem:shadow} has the following immediate consequence.

\begin{cor} 
\label{cor:Shi}
For any $g,g'\in M$, if the join of $g$ and $g'$ exists, then it belongs to~$M$. 
\end{cor}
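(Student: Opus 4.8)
The plan is to derive this immediately from Lemma~\ref{lem:shadow}, once two elementary facts about the map $m$ are in place: that $m$ is idempotent, and that $m(h)\preceq h$ for every $h\in W$. Both follow directly from the definition of $m(h)$ (Theorem~\ref{thm:Shi}) as the $\preceq$-smallest element of the Shi part $P$ containing $h$: indeed $h\in P$ gives $m(h)\preceq h$, and since $m(h)$ lies in $P$, the Shi part containing $m(h)$ is again $P$, so $m(m(h))=m(h)$. Consequently an element $g$ belongs to $M$ if and only if $m(g)=g$: if $g=m(f)$ then, as $g$ lies in the same Shi part as $f$, we get $m(g)=m(m(f))=m(f)=g$, and the converse is clear.

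Now let $g,g'\in M$ and suppose their join $h$ exists, so that $g\preceq h$ and $g'\preceq h$. Applying Lemma~\ref{lem:shadow} to each of these inequalities yields $m(g)\preceq m(h)$ and $m(g')\preceq m(h)$. Since $g=m(g)$ and $g'=m(g')$, this says precisely that $m(h)$ is an upper bound for both $g$ and $g'$. Because $h$ is the \emph{least} upper bound, we conclude $h\preceq m(h)$. On the other hand $m(h)\preceq h$ always holds, so $h=m(h)$, and therefore $h\in M$.

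I expect no real obstacle here; the whole content sits in Lemma~\ref{lem:shadow}. The only point deserving a line of justification is the pair of basic properties of $m$ (idempotency and $m(h)\preceq h$), which is why I would spell them out briefly before invoking the least-upper-bound property of the join.
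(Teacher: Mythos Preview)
Your proof is correct and follows exactly the approach the paper intends: the paper states only that the corollary is an immediate consequence of Lemma~\ref{lem:shadow}, and what you have written is precisely the unpacking of that word ``immediate''. The auxiliary facts you isolate (idempotency of $m$ and $m(h)\preceq h$) are indeed trivial from the definition, and the rest is the standard argument that a monotone idempotent map below the identity fixes joins of fixed points.
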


For completeness, we include the proof of the following.

\begin{lemma}[{\cite[Prop 4.16]{DH}}]
\label{lem:suffix_shi}
    For any $h \in M$ and $g \preceq h$, we have $g^{-1}h \in M$.
\end{lemma}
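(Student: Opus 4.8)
The plan is to use the following characterisation of $M$, implicit in the proof of Lemma~\ref{lem:shadow}: an element $w\in W$ lies in $M$ if and only if for every $s\in S$ with $\ell(ws)<\ell(w)$ the wall $\W_{wsw^{-1}}$ separating $w$ from $ws$ belongs to $\mathcal E$. One implication is established there; for the converse, if $\W_{wsw^{-1}}\notin\mathcal E$ for some such $s$, then $ws\prec w$ lies in the same Shi part as $w$ (the only wall separating $w$ from $ws$ is $\W_{wsw^{-1}}$), so $w\neq m(w)$.

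Set $w=g^{-1}h$. Left multiplication by $g$ is an isometry of $X^1$ taking $\id$ to $g$ and a wall $\W_r$ to $\W_{grg^{-1}}$. Under it, for $s\in S$ the wall $\W_{wsw^{-1}}$ between $w$ and $ws$ goes to the wall $\W_{hsh^{-1}}$ between $h$ and $hs$; the condition $\ell(ws)<\ell(w)$ becomes $d(g,hs)<d(g,h)$, i.e. the condition that $\W_{hsh^{-1}}$ separates $g$ from $h$; and the condition $\W_{wsw^{-1}}\in\mathcal E$, that no wall separates $\W_{wsw^{-1}}$ from $\id$, becomes the condition that no wall separates $\W_{hsh^{-1}}$ from $g$. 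So, by the characterisation above, proving $g^{-1}h\in M$ amounts to showing: for every $s\in S$ such that $\mathcal V:=\W_{hsh^{-1}}$ separates $g$ from $h$, no wall separates $\mathcal V$ from $g$.

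Fix such an $s$ and put $\mathcal V=\W_{hsh^{-1}}$. Since $g\preceq h$, the element $g$ lies on a geodesic from $\id$ to $h$, whose walls separating $g$ from $h$ are crossed within its $g$-to-$h$ part; hence the walls separating $g$ from $h$ are among those separating $\id$ from $h$. In particular $\mathcal V$ separates $\id$ from $h$, whence $\ell(hs)<\ell(h)$. Since $h\in M$, the characterisation gives $\mathcal V\in\mathcal E$, i.e. no wall separates $\mathcal V$ from $\id$. Suppose for contradiction that a wall $\mathcal U$ (necessarily distinct from $\mathcal V$) separates $\mathcal V$ from $g$. The midpoint $m$ of the edge $\{h,hs\}$ is fixed by $hsh^{-1}$, hence lies on $\mathcal V$, so $\mathcal U$ separates $m$ from $g$. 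The edge $\{h,hs\}$, having length $1$, is crossed by exactly one wall, namely $\mathcal V$; thus $\mathcal U$ does not cross it, so $h$ lies on the same side of $\mathcal U$ as $m$, i.e. on the side not containing $g$. Hence $\mathcal U$ separates $g$ from $h$. As $g\preceq h$, no wall separates $g$ from both $\id$ and $h$, so $\mathcal U$ does not separate $g$ from $\id$; therefore $\id$ lies on the $g$-side of $\mathcal U$, and $\mathcal U$ separates $\mathcal V$ from $\id$ — contradicting $\mathcal V\in\mathcal E$. Thus no such $\mathcal U$ exists and $g^{-1}h\in M$.

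The length-and-wall bookkeeping in the second and third paragraphs is routine; the crux, and the step I would take most care over, is the final one, where the edge $\{h,hs\}$ meeting $\mathcal V$ is used to push a hypothetical wall $\mathcal U$ separating $\mathcal V$ from $g$ past $h$, turning it into a wall separating $\mathcal V$ from $\id$ and so contradicting $\mathcal V\in\mathcal E$.
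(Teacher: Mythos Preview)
Your proof is correct and follows the same line as the paper's. The paper's proof is extremely terse: it notes that for any neighbour $h'$ of $h$ on a geodesic from $h$ to $g$ the separating wall $\W$ lies in $\mathcal E$ (your steps establishing $\mathcal V\in\mathcal E$), then says ``consequently $g^{-1}\W\in\mathcal E$'' and concludes $g^{-1}h\in M$; your final paragraph is precisely the unpacking of that ``consequently'', pushing a hypothetical wall $\mathcal U$ separating $\W$ from $g$ into one separating $\W$ from $\id$ via $g\preceq h$.
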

\begin{proof}
  For any neighbour $h'$ of $h$ on a geodesic edge-path from $h$ to $g$, the wall $\W$ separating $h$ from $h'$ belongs to $\mathcal E$. Consequently, we also have $g^{-1}\W\in \mathcal E$, and so $g^{-1}h\in M$. 
\end{proof}

Also note that for each $s\in S$, we have $\mathcal W_s\in \mathcal E$ and so $m(s)=s$ implying $S\subset M$.
Thus Corollary~\ref{cor:Shi} and Lemma~\ref{lem:suffix_shi} imply Theorem~\ref{thm:Shi2}.

\section {Cone type parts}
\label{sec:cone_type_components}
Let $T=T(g)$ for some $g\in W$. We denote by $\partial T$ the set of walls separating adjacent vertices $h\in T$ and $h'\notin T$. In particular, the walls in $\partial T$ separate $\id$ from~$g^{-1}$.

We note that one of the primary differences between the cone type parts and the Shi parts is that the cone type parts do not correspond to a `hyperplane arrangement'. See for example Figure~\ref{fig:conetype_arrangement334}.

\begin{figure}[H]
    \centering
    \includegraphics[scale=0.15]{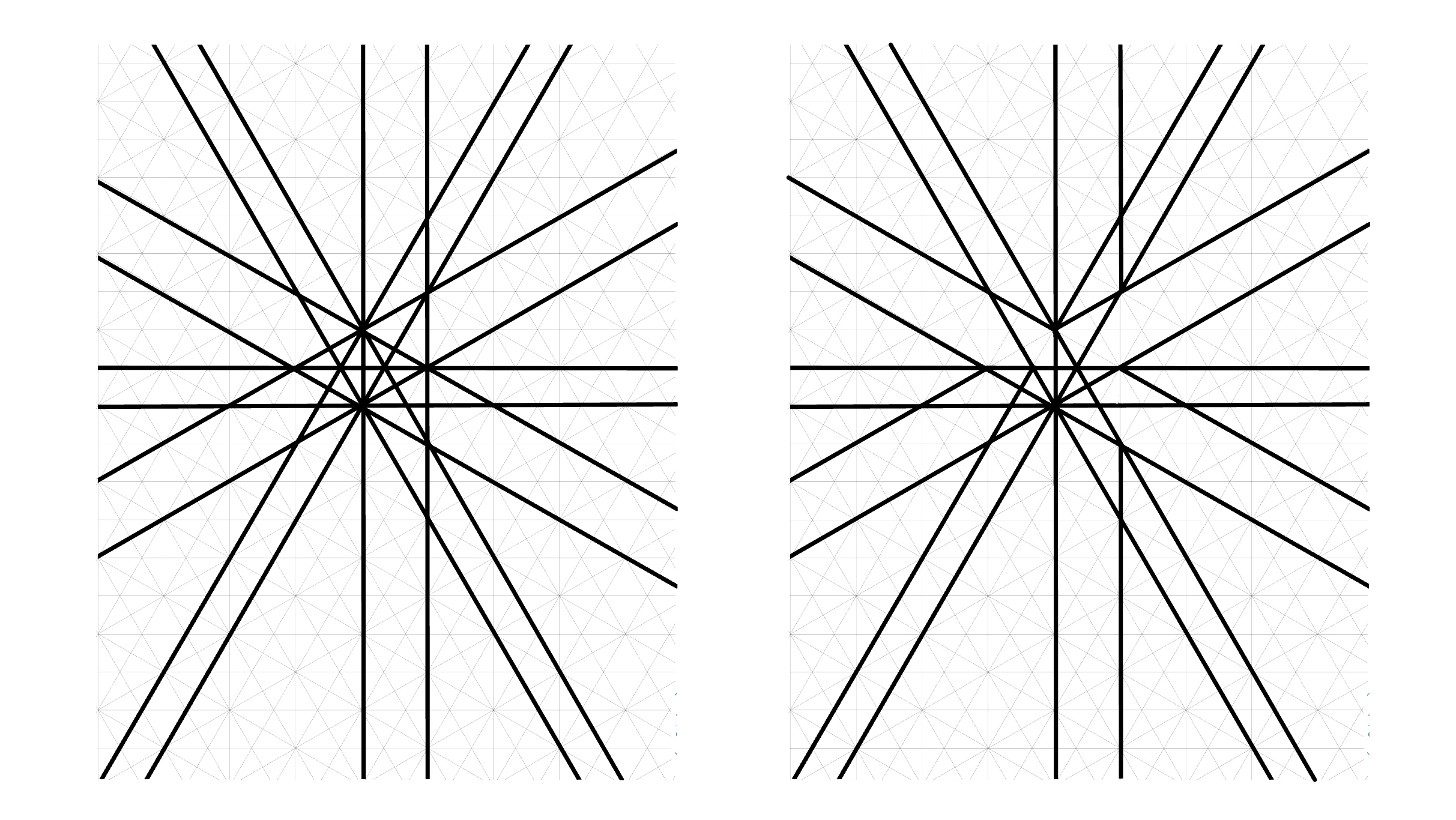}
    \caption{Shi parts and cone type parts for the Coxeter group of type $\widetilde{G}_2$}
    \label{fig:conetype_arrangement334}
\end{figure}

\begin{rem}
\label{rem:convex}
Note that for $g,g'\in Q(T)$ any geodesic edge-path from $g$ to $g'$ has all vertices~$f$ in~$Q(T)$.
Indeed, for $h\in T$, any wall separating $\id$ from $f$ separates $\id$ from $g$ or~$g'$ and so it does not separate $\id$ from $h$. Thus $h\in T(f^{-1})$ and so $T\subseteq T(f^{-1})$. Conversely, if we had $T\subsetneq T(f^{-1})$ then there would be a vertex $h\in T$ with a neighbour $h'\in T(f^{-1})\setminus T$ separated from $h$ by a wall $\W$ (in $\partial T$) that does not separate $h$ from $f$. The wall $\W$ would not separate $h'$ from $g$ or $g'$, contradicting $h'\notin T(g^{-1})$ or $h'\notin T(g'^{-1})$. See also \cite[Thm~2.14]{Parkinson-Yau_2022} for a more general statement. 
\end{rem}

\begin{proof}[Proof of Theorem~\ref{thm:conetype}]
The proof is identical to that of Theorem~\ref{thm:Shi}, with $P$ replaced by $Q$.
The vertices of a geodesic edge-path $\pi$ in $X^1$ from $p_0$ to $p_n$ belong to $Q$ by Remark~\ref{rem:convex}. We also make the following change in the proof of the claim that all the elements of $R=\langle r,q\rangle (p_i)$ lie in~$Q$.
Namely, since $T=T(p^{-1}_i)$ equals $T(p^{-1}_{i-1})$, we have $\W_r\notin \partial T$. Analogously we obtain $\W_q\notin \partial T$. If $r$ and $q$ do not commute, we have that $T$ is contained in a geometric fundamental domain for $\langle r,q\rangle$, and so we also have $\W'\notin \partial T$ for any~$\W'$ that is a translate
of $\W_r$ or $\W_q$ under an element of $\langle r,q\rangle$. This justifies the claim.
\end{proof}

\begin{proof}[Proof of Theorem~\ref{thm:conetype2}]
The proof structure is similar to that of Lemma~\ref{lem:shadow}. We need to justify that for $g\preceq h$, we have $\mu(g)\preceq \mu(h)$, where we induct on the minimal number~$k$ of distinct cone type components traversed by a geodesic edge-path $\gamma$ from $h$ to~$g$. 
Suppose $k>1$, and let $Q=Q(T)$ be the cone type component containing~$h$. 
If a neighbour~$f$ of $h$ on $\gamma$ lies in~$Q$, then we can replace $h$ by $f$. Thus we can assume $f\notin Q$. Consequently, the wall $\W_r$ separating $h$ from~$f$ belongs to $\partial T$. Since $g\preceq f$, by the inductive assumption we have $\mu(g)\preceq \mu(f)$. Thus it suffices to prove $\mu(f)\preceq \mu(h)$.

If for every neighbour $h'$ of $h$ on a geodesic edge-path from~$h$ to $\id$, the wall separating $h$ from $h'$ belongs to $\partial T$, we have $h=\mu(h)$ and we are done. Otherwise, let $\W_q$ be such a wall separating $h$ from $h'$ outside $\partial T$. Let $\bar h, \bar f$ be the vertices opposite to $f,h$ in the residue $\langle r,q\rangle h$, and let $f'=rqh$. It suffices to prove  $\mu(\bar h)=\mu(h), \mu(\bar f)=\mu(f)$. To justify $\mu(\bar h)=\mu(h)$, or, equivalently, $\bar h\in Q$, it suffices to observe that among the walls in $\langle r,q\rangle\{\W_r,\W_q\}$ only $\W_r$ belongs to $\partial T$: Indeed, if $r$ and~$q$ do not commute, then $r,q$ are sharp-angled, with $T$
in the geometric fundamental domain $F$ for $\langle r,q\rangle$ containing $\id$. 

It remains to justify $\mu(\bar f)=\mu(f)$, or, equivalently, $T(\bar f^{-1})=\widetilde T$ for $\widetilde T=T(f^{-1})$. Since $\widetilde T\cap F=T$, to show, for example, $T(f'^{-1})=\widetilde T$, it suffices to show that the wall $\W=r\W_q$ does not belong to $\partial \widetilde T$. 

Otherwise, let $b\in \widetilde T$ be adjacent to $\W$. Then $rb\in F$ is adjacent to $\W_q$, which is outside $\partial T$. Consequently, $rb\notin T$. Thus there is a wall $\W'$ separating $\id$ from $h$ and~$rb$. Note that $\W'\neq \W_r$ and so $\W'$ separates $\id$ from $f$.
Since $\id$ lies on a geodesic edge-path from $f$ to $b$, we have that $\W'$ does not separate $\id$ from $b$. 
%and $\W_r$ separates $\id$ from $b$. 
Thus $r\W'$ separates $r$ and $rb$ from $f,h,b$, and $\id$, since, again, $\id$ lies on a geodesic edge-path from $f$ to $b$.

Consider the distinct connected components $\Lambda_1,\Lambda_2,\Lambda_3,\Lambda_4$ of $X^1\setminus (\W_r\cup r\W')$ with $\id\in \Lambda_1,b\in \Lambda_2,r\in \Lambda_3,rb\in \Lambda_4$. %figure
Since $\id$ and $r$ are interchanged by the reflection $r$ and they lie in the opposite connected components, we have $r\Lambda_2\subsetneq \Lambda_1$. On the other hand, since $b$ and $rb$ lie in the opposite connected components, we have $r\Lambda_1\subsetneq \Lambda_2$, which is a contradiction. 

This proves that the wall $\W$ does not belong to $\partial \widetilde T$, and hence neither does any other wall in $\langle r,q\rangle\{\W_r,\W_q\}$. Consequently $T(\bar f^{-1})=\widetilde T$, as desired. 
\end{proof}

\begin{figure}[H]
    \begin{tikzpicture}[scale=0.8]
    % Draw grid

    % Draw intersecting lines
    % W_r
    \draw (-1.7,-3) -- (2,5);
    \node at (2.3,5.2) {$\mathcal{W}_r$};
    % W_q
    \draw (-5,2.34) -- (3.4,3.04);
    \node at (3.8,3.04) {$\mathcal{W}_q$};
    % rW_q
    \draw (-0.6,5.2) -- (4.4,-2);
    \node at (5.5,-1.8) {$\mathcal{W} = r\mathcal{W}_q$};
    % f,h, \bar{f}, \bar{h}, e
    \node at (1.5,3.3) {$h$};
    \node at (1.0,3.5) {$f$};
    \node at (0.4,3.1) {$f'$};
    \node at (1.1,2.3) {$\bar{h}$};
    \node at (0.56,2.4) {$\bar{f}$};
    \node at (-1.5,0) {$id$};
    \node at (0.5,-0.8) {$r$};
    \node at (3.2,-0.7) {$b$};
    \node at (-4.2,1.8) {$rb$};

    % Lambda parts
    \node at (-1.5,2.2) {$\Lambda_1$};
    \node at (-4.2, -2) {$\Lambda_4$};
    \node at (2.3, -2) {$\Lambda_3$};
    \node at (1.8,1) {$\Lambda_2$};
    
    % rW'
    \draw (5, -0.8) .. controls (2.5, -3) and (1.5, 2.8) .. (-1, -0.45) .. controls (-0.87, -0.33) and (-1.8,-1.5) .. (-2.3, 0) .. controls (-2.5, 1) and (-3, 2.5) .. (-3, 3.5);

    \node at (-3, 4) {$r\mathcal{W}'$};

\end{tikzpicture}
\caption{Proof of Theorem~\ref{thm:conetype2}, the case of $m_{rq} = 3$}
\label{}
\end{figure}
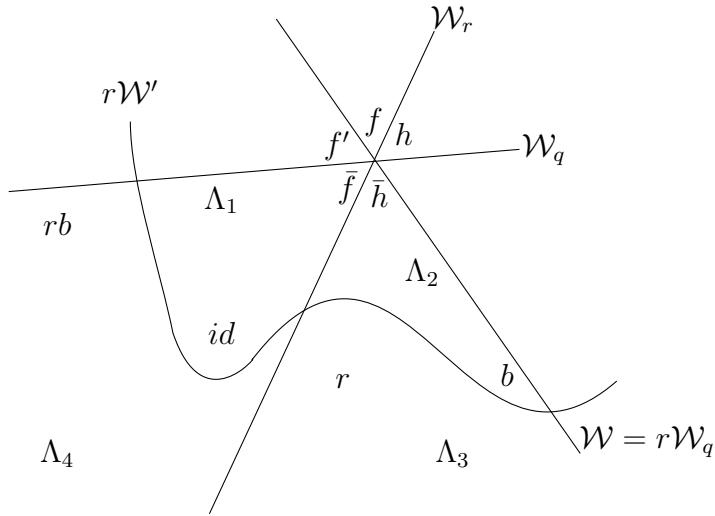

\begin{bibdiv}
\begin{biblist}

\bib{Brink1993}{article}{
author = {Brink, Brigitte},
author ={Howlett, Robert B.},
journal = {Mathematische Annalen},
keywords = {Coxeter groups; automatic groups; positive roots; root systems; generating sets},
number = {1},
pages = {179--190},
title = {A finiteness property and an automatic structure for Coxeter groups.},
volume = {296},
year = {1993},
}

\bib{DDH}{article}{
title = {Garside families in Artin–Tits monoids and low elements in Coxeter groups},
journal = {Comptes Rendus Mathematique},
volume = {353},
number = {5},
pages = {403--408},
year = {2015},
author = {Dehornoy, Patrick},
author = {Dyer, Matthew},
author = {Hohlweg, Christophe},
}

\bib{dyer_2019}{article}{ 
title={On the Weak Order of Coxeter Groups}, 
volume={71}, 
number={2}, 
journal={Canadian Journal of Mathematics}, 
publisher={Canadian Mathematical Society}, 
author={Dyer, Matthew}, 
year={2019}, 
pages={299--336}}

\bib{DH}{article}{
   author={Dyer, Matthew},
   author={Hohlweg, Christophe},
   title={Small roots, low elements, and the weak order in Coxeter groups},
   journal={Adv. Math.},
   volume={301},
   date={2016},
   pages={739--784}}

\bib{DHFM}{article}{
   author={Dyer, Matthew},
   author={Fishel, Susanna},
   author={Hohlweg, Christophe},
   author={Mark, Alice},
   title={Shi arrangements and low elements on Coxeter groups},
   eprint={arXiv:2303.16569},
   date={2023}}

\bib{fishel2020}{article}{
      title={A survey of the Shi arrangement}, 
      author={Fishel, Susanna},
      year={2020},
      eprint={arXiv:1909.01257}
}

\bib{HNW}{article}{
      title={Automata, reduced words, and Garside shadows in Coxeter groups}, 
      author={Hohlweg, Christophe},
      author={Nadeau, Philippe},
      author={Williams, Nathan},
      journal={Journal of Algebra},
      volume={457},
      date={2016}
      pages={431--456}
}

\bib{OP}{article}{
   author={Osajda, Damian},
   author={Przytycki, Piotr},
   title={Coxeter groups are biautomatic},
   eprint={arXiv:2206.07804},
   date={2022}}

\bib{Parkinson-Yau_2022}{article}{
   author={Parkinson, James},
   author={Yau, Yeeka},
   title={Cone types, automata, and regular partitions in Coxeter groups},
   journal={Adv. Math.},
   volume={398},
   date={2022},
   pages={Paper No. 108146, 66pp}}

\bib{Ronan_2009}{book}{
   author={Ronan, Mark},
   title={Lectures on buildings},
   note={Updated and revised},
   publisher={University of Chicago Press, Chicago, IL},
   date={2009},
   pages={xiv+228}}

\bib{Shi_1987}{article}{
   author={Shi, Jian Yi},
   title={Alcoves corresponding to an affine Weyl group},
   journal={J. London Math. Soc. (2)},
   volume={35},
   date={1987},
   number={1},
   pages={42--55}}

\end{biblist}
\end{bibdiv}

\end{document}